\documentclass[12pt,reqno]{amsart}
\usepackage{graphicx,subfigure}
\usepackage{hyperref}
\newcommand{\vertiii}[1]{{\left\vert\kern-0.25ex\left\vert\kern-0.25ex\left\vert #1
    \right\vert\kern-0.25ex\right\vert\kern-0.25ex\right\vert}}
\hypersetup{colorlinks=true, citecolor=blue, linkcolor=red}

\numberwithin{equation}{section} \numberwithin{figure}{section}
\numberwithin{table}{section} \setlength{\oddsidemargin}{0in}
\setlength{\evensidemargin}{0in} \setlength{\textwidth}{6.5in}

\setlength{\topmargin}{-.3in} \setlength{\textheight}{9in}

{ \theoremstyle{plain}
\newtheorem{theorem}{Theorem}[section]

\newtheorem{lemma}[theorem]{Lemma}

}



\begin{document}

\title[On the growth of the energy  of entire solutions]{On the growth of the energy  of entire solutions to the vector  Allen-Cahn equation}



\author{Christos Sourdis} \address{Department of Mathematics and Applied Mathematics, University of
Crete.}
              \email{csourdis@tem.uoc.gr}           




\maketitle

\begin{abstract}
We prove that the energy over
 balls of entire, nonconstant, bounded solutions to the vector Allen-Cahn equation grows faster than $(\ln R)^k R^{n-2}$, for any $k>0$, as the volume $R^n$ of the ball tends to infinity. This improves the  growth rate of order $R^{n-2}$ that
 follows from the general weak monotonicity formula. Moreover, our estimate can be considered as an approximation to the corresponding rate of order $R^{n-1}$ that is known to hold in the scalar case.
\end{abstract}
\section{Introduction}


Consider the semilinear elliptic system
\begin{equation}\label{eqEq}
\Delta u= W_u(u)\ \ \textrm{in}\ \ \mathbb{R}^n,\ \ n\geq 2,
\end{equation}
where $W:\mathbb{R}^m\to \mathbb{R}$, $m\geq 2$, is sufficiently
smooth and \emph{nonnegative}. This system has variational structure, as
solutions (in a smooth, bounded  domain $\Omega\subset \mathbb{R}^n$)
are critical points of the energy
\[
E(v;\Omega)=\int_{\Omega}^{}\left\{\frac{1}{2}|\nabla v|^2+ W(v)
\right\}dy
\]
(subject to their own boundary conditions).

In the scalar case, namely $m=1$, Modica \cite{modica} used the
$P$-function technique \cite{sperb} and intrinsically scalar arguments to show that every entire, bounded solution to
(\ref{eqEq}) satisfies the pointwise gradient bound
\begin{equation}\label{eqmodica}
\frac{1}{2}|\nabla u|^2\leq W(u) \ \ \textrm{in}\ \ \mathbb{R}^n,
\end{equation}
(see also \cite{cafamodica} and \cite{farinaFlat}). Using this,
together with Pohozaev identities, it was shown in \cite{modicaProc}
that  such solutions satisfy the following
strong monotonicity formula:
\begin{equation}\label{eqmonotonicity}
\frac{d}{dr}\left(\frac{1}{r^{n-1}}\int_{B(x,r)}^{}\left\{\frac{1}{2}|\nabla
u|^2+ W\left(u\right) \right\}dy\right)\geq 0,\ \ r>0,\ x\in
\mathbb{R}^n,
\end{equation}
where $B(x,r)$ stands for the $n$-dimensional ball of radius $r$
that is centered at  $x$. In particular, it follows that  each entire,
bounded and nonconstant solution to the scalar problem satisfies
\begin{equation}\label{eqenergyLowerModi}
\int_{B(x,r)}^{}\left\{\frac{1}{2}|\nabla v|^2+ W(v)
\right\}dy\geq c r^{n-1},\ \ r>0,\ \
\textrm{for some}\ c>0.
\end{equation}

In the vector case, that is $m\geq 2$,
the analog of the gradient bound (\ref{eqmodica}) does not hold in general.
In passing, let us mention that if a solution $u$ satisfied the analog of the gradient bound
(\ref{eqmodica}), then it would also satisfy the strong monotonicity formula (\ref{eqmonotonicity})
(see \cite{alikakosBasicFacts}, \cite{sourdisMonot}).
All  is not lost, however,
as using the fact that every solution to (\ref{eqEq}) satisfies the weak monotonicity formula
\begin{equation}\label{eqmonotoniWeak}
\frac{d}{dr}\left(\frac{1}{r^{n-2}}\int_{B(x,r)}^{}\left\{\frac{1}{2}|\nabla
u|^2+ W\left(u\right) \right\}dy\right)\geq 0,\ \ r>0,\ x\in
\mathbb{R}^n, \ \ (\textrm{see\ \cite{alikakosBasicFacts,bethuelBIG,caffareliLin,smets,sourdisMonot}}),
\end{equation}
and with some more work in the case $n=2$, it follows readily that, given $x\in \mathbb{R}^n$,  each
nonconstant solution to the system (\ref{eqEq}) satisfies:
\begin{equation}\label{eqGrande}
\int_{B(x,r)}^{}\left\{\frac{1}{2}|\nabla u|^2+ W\left(u\right)
\right\}dy\geq \left\{\begin{array}{ll}
                        c r^{n-2} & \textrm{if}\ n\geq 3,   \\
                          &     \\
                        c \ln r  & \textrm{if}\ n=2,
                      \end{array}
\right.
\end{equation}
for all $r>1$ and some $c>0$.

Let us mention that the above lower bound is sharp in the case $n=2$.
Indeed, for the
Ginzburg-Landau system
\begin{equation}\label{eqGL}
\Delta u=\left(|u|^2-1 \right)u,\ \ u:\mathbb{R}^n\to \mathbb{R}^m,
\ \ \left(\textrm{here}\ W(u)=\frac{\left(1-|u|^2\right)^2}{4}\
\textrm{vanishes on}\ \mathbb{S}^{m-1} \right),
\end{equation}
arising in superconductivity, there are entire, bounded solutions with   energy  over
$B(0,r)$ of order $\ln r$, as $r\to \infty$, if $n=m=2$ (see
\cite{bethuelBIG} and the references therein).

In this note, we will specialize to the class of  potentials that satisfy the following properties:
$W\in C^2$ and there exist $N\in \mathbb{N}$ points $a_i\in \mathbb{R}^m$ such that
\begin{equation}\label{eqpot1}
W(a_i)=0,\ \ W>0\ \ \textrm{in}\ \ \mathbb{R}^m\setminus \{a_i \}\ \ \textrm{and}\ \
W_{uu}(a_i)\nu \cdot \nu >0\ \ \forall\ \nu \in \mathbb{S}^{m-1},\ \ i=1,\cdots, N,
\end{equation}
where $\cdot$ stands for the Euclidean inner product in $\mathbb{R}^m$.
For this class of potentials, the system (\ref{eqEq}) is known as the vector Allen-Cahn equation and
models multiphase transitions (see \cite{baldo} \cite{bronReih}).

In this case, it was shown recently in \cite{AlikakosDensity}, by extending the density estimates of \cite{cafaCordoba} to this vector setting, that nonconstant bounded, \emph{minimal} solutions satisfy
\begin{equation}\label{eqAF}
\liminf_{r\to
\infty}\frac{1}{r^{n-1}}\int_{B(x,r)}^{}\left\{\frac{1}{2}|\nabla
u|^2+ W\left(u\right)\right\}dy>0,\ \ \forall\ x\in \mathbb{R}^n.
\end{equation}
For a simple proof of this result when $n=2$, under weaker assumptions on $W$, we refer to \cite{sourdis14}.
In comparison, let us note that the solution mentioned in relation to (\ref{eqGL}) is minimal.
On the other side, if $u$ is a nonconstant solution which is periodic in each variable,  as in \cite{batesCrystal} for the vector Allen-Cahn equation or \cite{mironescPeriodic} for (\ref{eqGL}), it is easy to see that
\[
\liminf_{r\to
\infty}\frac{1}{r^{n}}\int_{B(x,r)}^{}\left\{\frac{1}{2}|\nabla
u|^2+ W\left(u\right)\right\}dy>0,\ \ \forall\ x\in \mathbb{R}^n.
\]
It was shown recently in \cite{sourdis14} that the above relation also holds for nonconstant radial solutions, as the ones in \cite{guiRadial} for the scalar Allen-Cahn equation $\Delta u=u^3-u$.

It is therefore natural to ask what can be said, besides of the
lower bound (\ref{eqGrande}), about arbitrary bounded solutions to
the vector Allen-Cahn equation. Our main result is the following
improvement of the lower bound (\ref{eqGrande}) for this class of systems.

\begin{theorem}\label{thmMine}
Assume that $W\in C^2(\mathbb{R}^m;\mathbb{R})$, $m\geq 2$, satisfies (\ref{eqpot1}).
 If $u\in C^2(\mathbb{R}^n;\mathbb{R}^m)$, $n\geq 2$, is
a  nonconstant, entire and bounded solution to the
elliptic system (\ref{eqEq}),
for any $x\in \mathbb{R}^n$ and $k>0$, it holds that
 \begin{equation}\label{eqassert2}
\frac{1}{(\ln r)^k}\frac{1}{ r^{n-2}}\int_{B(x,r)}^{}\left\{\frac{1}{2}|\nabla u|^2+ W\left(u\right)
\right\}dy\to \infty,\ \ \textrm{as}\ \ r\to \infty.
\end{equation}
\end{theorem}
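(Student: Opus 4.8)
The plan is to sharpen the weak monotonicity formula (\ref{eqmonotoniWeak}) to its Pohozaev form and then combine it with a ``reverse Modica'' comparison of the Dirichlet and potential parts of the energy, after which the conclusion comes out of a simple bootstrap. Testing (\ref{eqEq}) against $(y-x)\cdot\nabla u$ on $B(x,r)$ (the usual stress--energy identity) yields, for every $x\in\bbR^n$,
\[
\frac{d}{dr}\Big(r^{2-n}E(u;B(x,r))\Big)=r^{2-n}\int_{\partial B(x,r)}|\partial_r u|^2\,d\sigma+2r^{1-n}\int_{B(x,r)}W(u)\,dy,
\]
both terms on the right being nonnegative. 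Set $\Phi(r):=r^{2-n}E(u;B(x,r))$, which is nondecreasing and (since $u$ is nonconstant) positive for $r$ large, and $P(r):=\int_{B(x,r)}W(u)\,dy$. Then $\Phi'(r)\ge 2r^{1-n}P(r)=\tfrac{2}{r}\,r^{2-n}P(r)$; the factor $1/r$ is exactly what converts any lower bound on the potential energy into a gain of $\ln r$ upon integration, so the whole game is to bound $P$ from below — which in the scalar case is free, $P\ge E/2$ by (\ref{eqmodica}), but must be done by hand here.

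The key step is an integrated reverse-Modica inequality: there are constants $c,c_0>0$ (depending on $u,W,n$) with
\[
P(r+c_0)\ \ge\ c\,E(u;B(x,r))\qquad\text{for all }r\ge 1 .
\]
To get it, first observe $\|\nabla u\|_{L^\infty(\bbR^n)}\le C_u$ by interior elliptic estimates for $\Delta u=W_u(u)$ (the right-hand side is bounded on the range of $u$). Cover $B(x,r)$ by unit balls $B(z_\alpha,1)$ whose dilates $B(z_\alpha,c_0)$ have overlap bounded by $N_0(n)$. For each $\alpha$ there is a dichotomy. If $u$ remains in a small fixed neighbourhood of a single well $a_i$ throughout $B(z_\alpha,2)$, then, since $W_{uu}\ge\la' I$ there by (\ref{eqpot1}) — so that $(u-a_i)\cdot W_u(u)\ge 0$ — testing $\Delta(u-a_i)=W_u(u)$ against $(u-a_i)\zeta^2$ gives the Caccioppoli bound $\int_{B(z_\alpha,1)}|\nabla u|^2\le C\int_{B(z_\alpha,2)}|u-a_i|^2$, and $W(u)=\tfrac12(u-a_i)^TW_{uu}(\cdot)(u-a_i)\gtrsim|u-a_i|^2$ in that neighbourhood turns this into $\int_{B(z_\alpha,1)}|\nabla u|^2\le C'\int_{B(z_\alpha,2)}W(u)$. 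Otherwise $\mathrm{dist}(u,\{a_i\})\ge\eta$ at some point of $B(z_\alpha,2)$, hence $\ge\eta/2$ on a ball of a fixed radius $\tau=\tau(\eta,C_u)$, so $\int_{B(z_\alpha,c_0)}W(u)\ge\kappa>0$ while $\int_{B(z_\alpha,1)}|\nabla u|^2\le C_u^2|B_1|$; again $\int_{B(z_\alpha,1)}|\nabla u|^2\le C^*\int_{B(z_\alpha,c_0)}W(u)$. Summing over $\alpha$ and adding $P$ to both sides gives the inequality.

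Plugging the lemma into the monotonicity inequality gives, for all $r$ large,
\[
\Phi'(r)\ \ge\ \frac{2}{r}\,r^{2-n}P(r)\ \ge\ \frac{2c}{r}\Big(\frac{r-c_0}{r}\Big)^{n-2}\Phi(r-c_0)\ \ge\ \frac{\gamma}{r}\,\Phi(r-c_0),\qquad\gamma>0,
\]
using $(1-c_0/r)^{n-2}\to 1$. This I would bootstrap: $\Phi(r-c_0)\ge\Phi_0>0$ for $r$ large, so integration gives $\Phi(r)\ge a_1\ln r$; then $\Phi(r-c_0)\ge a_1\ln(r-c_0)\ge\tfrac12 a_1\ln r$, and integrating again gives $\Phi(r)\ge a_2(\ln r)^2$; inductively $\Phi(r)\ge a_m(\ln r)^m$ for every $m\in\bbN$ and all large $r$. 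Given $k>0$, choosing an integer $m>k$ yields $(\ln r)^{-k}\Phi(r)\ge a_m(\ln r)^{m-k}\to\infty$, which is (\ref{eqassert2}).

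The main obstacle is the reverse-Modica lemma itself, i.e.\ controlling the Dirichlet energy by the potential energy in the vector case, where no pointwise substitute for (\ref{eqmodica}) exists; the delicate points are the ``trapped near one well versus not'' dichotomy, the use of the nondegeneracy in (\ref{eqpot1}) both to make the Caccioppoli estimate come out with the favourable sign and to get the zeroth-order comparison $W(u)\gtrsim|u-a_i|^2$, and the harmless bookkeeping of the fixed dilation $c_0$ (which disappears in the bootstrap since $(r-c_0)^{n-2}/r^{n-2}\to1$). The Pohozaev identity and the ODE bootstrap are then routine once this lemma is in hand.
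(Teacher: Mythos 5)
Your proof is correct, and it takes a genuinely different route from the paper's. The paper argues by contradiction: assuming $\tilde E(u,0,R_j)\lesssim(\ln R_j)^k$ along a sequence, it invokes the mean-value (pigeonhole) lemma of Smets over the dyadic range $(R_j^{1/2},R_j)$ to produce a radius $r_j$ on which the boundary flux term and $\int W(u)$ are small, then tests the equation once globally against a smooth truncation $F(u)$ of $u-a_i$ to transfer that smallness to the Dirichlet energy; each pass improves the exponent from $k$ to $k-\tfrac12$, and a finite iteration drives $\tilde E\to 0$, contradicting nonconstancy via weak monotonicity. You instead isolate a standalone \emph{reverse-Modica} inequality $P(r+c_0)\ge c\,E(u;B(x,r))$, proved not by one global integration by parts but by a unit-ball covering with a ``trapped near a single well / far from all wells'' dichotomy (Caccioppoli with the good sign of $(u-a_i)\cdot W_u(u)$ in the first case, positivity of $W$ on the compact set $\{|v|\le C_0,\,\mathrm{dist}(v,\{a_i\})\ge\eta/2\}$ plus the $C^1$ bound in the second), and then feed it into $\Phi'(r)\ge 2r^{1-n}P(r)$ to obtain the delay inequality $\Phi'(r)\ge\gamma\,\Phi(r-c_0)/r$, which you bootstrap. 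The underlying ingredients (Pohozaev monotonicity, elliptic a priori bound, Caccioppoli testing near a well, nondegeneracy of $W$) are the same, but the architecture differs: the paper's route is shorter and tied to the $(\ln r)^k$ scale, while yours is more modular and in fact gives more than you state — comparing $\Phi'\ge\gamma\Phi(r-c_0)/r$ against $f(r)=\ep r^\alpha$ shows, for any $\alpha<\gamma$ and suitable $\ep>0$, that $\Phi(r)\ge\ep r^\alpha$ for large $r$, a genuine power-law lower bound rather than just super-polylogarithmic growth. Two points worth making explicit when writing this up: take $\eta$ smaller than both $\delta$ and half the minimal separation of the wells, so that in the trapped case connectedness of $B(z_\alpha,2)$ forces a single index $i$; and record that $\bigcup_\alpha B(z_\alpha,c_0)\subset B(x,r+c_0+1)$ with the overlap count of the dilated balls bounded independently of $r$, so the summed Caccioppoli estimates really land in $P(r+c_0+1)$.
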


Our result
implies that, in contrast to the Ginzburg-Landau system
(\ref{eqGL}), the growth in the latter estimate cannot be achieved
for any nonconstant bounded solution of the vector Allen-Cahn
equation for any $n,m\geq 2$.
Moreover, it can be cosidered as an approximation to the corresponding lower bound (\ref{eqenergyLowerModi})
that holds in the scalar case and to (\ref{eqAF}) in the case of minimal solutions.

Our proof of Theorem \ref{thmMine} will be based on the monotonicity formula (\ref{eqmonotoniWeak}) and the following lemma from \cite{smets}:
\begin{lemma}\label{lemSmets}
Assume that $W\in C^1(\mathbb{R}^m;\mathbb{R})$, $m\geq 2$,  and that
$u\in C^2(\mathbb{R}^n;\mathbb{R}^m)$ satisfies (\ref{eqEq}).
Then, for every  $x\in \mathbb{R}^n$ and any positive numbers $R_0<R_1$, there exists $r(x)\in (R_0,R_1)$ such that
\begin{equation}\label{eqsmets}
r(x)\int_{\partial B\left(x,r(x) \right)}^{}
\left|\frac{\partial u}{\partial \nu} \right|^2 dS(y)
+2\int_{B\left(x,r(x) \right)}^{}W(u)dy\leq \frac{1}{\ln(R_1/R_0)}\ln\left( \frac{\tilde{E}(u,x,R_1)}{\tilde{E}(u,x,R_0)}\right)\int_{B\left(x,r(x) \right)}^{}e(u)dy,
\end{equation}
where
\[
e(u)\equiv\frac{1}{2}|\nabla u|^2+ W\left(u\right)
\]
and
\[
\tilde{E}(u,x,r)\equiv\frac{1}{r^{n-2}}\int_{B(x,r)}^{}\left\{\frac{1}{2}|\nabla u|^2+ W\left(u\right)
\right\}dy.
\]
\end{lemma}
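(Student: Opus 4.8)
The plan is to combine a Pohozaev-type identity on balls — the same identity underlying the weak monotonicity formula \eqref{eqmonotoniWeak} — with an elementary mean value argument applied to $\ln\tilde E$. Fix $x\in\mathbb{R}^n$; after a translation we may assume $x=0$ and write $B_r=B(0,r)$, $E(r)=\int_{B_r}e(u)\,dy$, so that $\tilde E(u,x,r)=r^{2-n}E(r)$, abbreviated $\tilde E(r)$. If $E(R_0)=0$ then $u$ is constant (and equal to one of the $a_i$), whence every term in \eqref{eqsmets} vanishes and there is nothing to prove; so we may assume $E(r)>0$ for $r\in[R_0,R_1]$. Since $u\in C^2$ and $W\in C^1$, $e(u)$ is continuous, so $E\in C^1$ with $E'(r)=\int_{\partial B_r}e(u)\,dS$ by the coarea formula.

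The first step is to record the Pohozaev identity. Testing $\Delta u=W_u(u)$ with the radial field $y\cdot\nabla u=\sum_j y_j\partial_j u$, integrating over $B_r$, and integrating by parts (using $\nu=y/r$ on $\partial B_r$), one arrives at
\begin{equation}\label{eqPoho}
rE'(r)-(n-2)E(r)=r\int_{\partial B_r}\left|\frac{\partial u}{\partial\nu}\right|^2 dS+2\int_{B_r}W(u)\,dy ,\qquad r>0 .
\end{equation}
Multiplying by $r^{1-n}$ shows $\frac{d}{dr}\big(r^{2-n}E(r)\big)\ge 0$, i.e. \eqref{eqmonotoniWeak}; for the present purpose the relevant point is the exact identity \eqref{eqPoho}, which identifies the left-hand side of \eqref{eqsmets} (at radius $r(x)$) with $r(x)E'(r(x))-(n-2)E(r(x))$.

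Now I would pass to logarithmic variables. Since $E>0$ on $[R_0,R_1]$, the function $g(s):=\ln\tilde E(e^s)=\ln E(e^s)-(n-2)s$ is $C^1$ on $[\ln R_0,\ln R_1]$, and with $r=e^s$,
\[
g'(s)=\frac{rE'(r)}{E(r)}-(n-2)=\frac{rE'(r)-(n-2)E(r)}{E(r)} .
\]
By the mean value theorem there is $s^\ast\in(\ln R_0,\ln R_1)$ with
\[
g'(s^\ast)=\frac{g(\ln R_1)-g(\ln R_0)}{\ln R_1-\ln R_0}=\frac{1}{\ln(R_1/R_0)}\ln\!\left(\frac{\tilde E(R_1)}{\tilde E(R_0)}\right).
\]
Set $r(x):=e^{s^\ast}\in(R_0,R_1)$. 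Multiplying the last display by $E(r(x))=\int_{B_{r(x)}}e(u)\,dy>0$ gives
\[
r(x)E'(r(x))-(n-2)E(r(x))=\frac{1}{\ln(R_1/R_0)}\ln\!\left(\frac{\tilde E(R_1)}{\tilde E(R_0)}\right)\int_{B_{r(x)}}e(u)\,dy ,
\]
and substituting \eqref{eqPoho} at $r=r(x)$ into the left-hand side yields \eqref{eqsmets} (in fact with equality), completing the proof.

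The only genuine computation is the derivation of \eqref{eqPoho}: it is the standard Pohozaev/divergence argument, and the point requiring care is the bookkeeping of boundary and interior terms — the contributions $\tfrac r2\int_{\partial B_r}|\nabla u|^2\,dS$ and $r\int_{\partial B_r}W(u)\,dS$ must be recombined into $rE'(r)$, and the interior term $(\tfrac n2-1)\int_{B_r}|\nabla u|^2\,dy$ must be rewritten via $\tfrac12\int_{B_r}|\nabla u|^2=E(r)-\int_{B_r}W(u)$ so as to produce precisely the coefficient $n-2$ in front of $E(r)$ and the factor $2$ in front of $\int_{B_r}W(u)\,dy$. Once \eqref{eqPoho} is in hand, the remainder is the mean value theorem.
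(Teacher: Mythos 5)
Your proof is correct and follows essentially the same route that the paper attributes to Smets: the Pohozaev identity, rewritten as \eqref{eqmonotSmets}, combined with a mean-value (equivalently, averaging) argument for $\ln\tilde{E}$ in logarithmic variables $s=\ln r$. The only minor imprecision is the parenthetical aside that $E(R_0)=0$ forces $u\equiv a_i$ globally --- the lemma assumes only $W\in C^1$, not \eqref{eqpot1}, and unique continuation is not immediate for this nonlinear system; but since $\tilde{E}(R_0)=0$ makes the right-hand side of \eqref{eqsmets} undefined, that degenerate case is best simply excluded from the statement rather than argued away.
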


This lemma was  proven in \cite{smets} for the special case of the Ginzburg-Landau system (\ref{eqGL}) but  the proof carries over verbatim to the general case. In particular, it is based on the  identity
\begin{equation}\label{eqmonotSmets}
\frac{d}{dr}\left(\tilde{E}(u,x,r) \right)=\frac{1}{r^{n-2}}\int_{\partial B\left(x,r \right)}^{}
\left|\frac{\partial u}{\partial \nu} \right|^2 dS(y)+\frac{2}{r^{n-1}}\int_{ B\left(x,r \right)}^{}
W(u)dy,
\end{equation}
which implies at once the weak monotonicity formula
(\ref{eqmonotoniWeak}) for nonnegative $W$ and is a direct consequence of Pohozaev identities for systems (see \cite{zhao} for the
case of general $W$).
Note also that, in contrast to \cite{smets}, we have included the boundary integral in the lefthand side of (\ref{eqsmets}) (we have also kept the factor $2$).

The rest of this article is devoted to the
proof of Theorem \ref{thmMine}.

\section{Proof of the main result}
\begin{proof}[Proof of Theorem \ref{thmMine}]
Since the problem is translation invariant, without loss of
generality, we may carry out the proof for $x=0$.

For future reference, we note that by standard elliptic regularity theory (see \cite{Gilbarg-Trudinger}), there exists a constant $C_0>0$ such that
\begin{equation}\label{eqapriori}
|u|+|\nabla u|\leq C_0\ \ \textrm{in}\ \ \mathbb{R}^n.
\end{equation}

Suppose, to the contrary, that (\ref{eqassert2}) does not hold.
Then, there would exist   constants $k,C_1>1$ and a sequence $R_j>1$ with $R_j\to \infty$ such that
\begin{equation}\label{eqoriginal}
\tilde{E}(u,0,R_j)\leq C_1 (\ln R_j)^k,\ \ j\geq 1.
\end{equation}
On the other side, thanks to (\ref{eqGrande}), we have that
\[
\tilde{E}(u,0,R_j^\frac{1}{2})\geq C_2 ,\ \ j\geq 1,
\]
for some $C_2>0$ ($C_2<C_1$ without loss of generality). In passing, we note that our motivation
for the power $1/2$ comes from the proof of the $\eta$-compactness lemma in \cite{riviere}.
By Lemma \ref{lemSmets}, we infer that there exist $r_j\in (R_j^\frac{1}{2},R_j)$ such that
\begin{equation}\label{eqsmets2}\begin{array}{rcl}
    r_j\int_{\partial B\left(0,r_j \right)}^{}
\left|\frac{\partial u}{\partial \nu} \right|^2 dS(y)+2\int_{B(0,r_j)}^{}W(u)dy& \leq &  \frac{2}{\ln R_j}\ln\left(\frac{C_1  (\ln R_j)^k}{C_2} \right)r_j^{n-2}\tilde{E}(u,0,r_j) \\
     &   &  \\
   \textrm{using}\ (\ref{eqmonotoniWeak}): &\leq& \frac{2}{\ln R_j}\ln\left(\frac{C_1  (\ln R_j)^k}{C_2} \right)C_1r_j^{n-2}  (\ln R_j)^k\\
 &   &  \\
 & \leq  & C_3 (\ln R_j)^{k-\frac{1}{2}} r_j^{n-2} \\
 &   &  \\
 & \leq   & C_4  (\ln r_j)^{k-\frac{1}{2}} r_j^{n-2},
  \end{array}
\end{equation}
for some $C_3,C_4>0$ and all $j\geq 1$.

Let $F\in C^\infty(\mathbb{R}^m;\mathbb{R}^m)$ be such that
\begin{equation}\label{eqF}
F(v)=v-a_i\ \ \textrm{if}\ \ |v-a_i|\leq \delta,\ \ i=1,\cdots,N,
\end{equation}
for some small $\delta>0$ (so that the $\delta$-neighborhoods of the $a_i$'s are disjoint).
We note that such a function can be constructed by first defining it in a $\delta$-neighborhood  of each $a_i$
and then extending it componentwise.
For future reference, let us note at this point that, by virtue of (\ref{eqpot1}) and (\ref{eqapriori}),  there exists a constant $C_5>0$ such that
\begin{equation}\label{eqcrucial}
\left|F(u)\cdot W_u(u)\right|\leq C_5 W(u), \ \ x\in \mathbb{R}^n.
\end{equation}
Taking the inner product of (\ref{eqEq}) with $F(u)$ and integrating by parts the resulting identity over $B(0,r_j)$ yields that
\begin{equation}\label{eqcombo1}\begin{array}{rcl}
    \int_{B(0,r_j)}^{}   F_u(u)  \nabla u  \cdot \nabla u dy & = & \int_{\partial B(0,r_j)}^{}\frac{\partial u}{\partial \nu} \cdot F(u) dS(y)-\int_{B(0,r_j)}^{}F(u)\cdot  W_u(u)dy \\
      &   &   \\
 \textrm{using}\ (\ref{eqapriori}),\ (\ref{eqcrucial}):     & \leq &  C_6r_j^{\frac{n-1}{2}}\left(\int_{\partial B(0,r_j)}^{} \left| \frac{\partial u}{\partial \nu}\right|^2dS(y)\right)^\frac{1}{2}+C_5\int_{B(0,r_j)}^{}W(u)dy
 \\
      &   & \\
\textrm{using}\ (\ref{eqsmets2}):& \leq &C_{7}(\ln r_j)^{k-\frac{1}{2}}r_j^{n-2},
  \end{array}
\end{equation}
for some $C_6, C_{7}>0$ and all $j\geq 1$.

Let
\[
\mathcal{A}_j=\left\{x\in B(0,r_j)\ :\ \left|u(x)-a_i\right|> \delta,\ \ i=1,\cdots,N \right\}.
\]
It follows from the first part of (\ref{eqpot1}), (\ref{eqapriori}) and (\ref{eqsmets2}) that
the $n$-dimensional Lebesgue measure of $\mathcal{A}_j$ satisfies
\[
\mathcal{H}^n(\mathcal{A}_j)\leq C_{8}(\ln r_j)^{k-\frac{1}{2}} r_j^{n-2},
\]
for some $C_{8}>0$ and all $j\geq 1$.
Therefore, using once more (\ref{eqapriori}), and (\ref{eqF}), we obtain that
\[
\begin{array}{rcl}
  \int_{B(0,r_j)}^{}   F_u(u)  \nabla u  \cdot \nabla u dy & \geq &
\int_{B(0,r_j)\setminus \mathcal{A}_j}^{}     |\nabla u|^2 dy-C_{9}(\ln r_j)^{k-\frac{1}{2}} r_j^{n-2} \\
    &   &   \\
    & \geq  & \int_{B(0,r_j) }^{}     |\nabla u|^2 dy-C_{10}(\ln r_j)^{k-\frac{1}{2}} r_j^{n-2},
\end{array}
\]
for some $C_{9},\ C_{10}>0$ and all $j\geq 1$.
Hence, it follows from (\ref{eqcombo1}) that
\[
\int_{B(0,r_j)}^{}     |\nabla u|^2 dy\leq C_{11}(\ln r_j)^{k-\frac{1}{2}} r_j^{n-2},
\]
for some $C_{11}>0$ and all $j\geq 1$.

By combining the above relation with (\ref{eqsmets2}), we arrive at
\[
\tilde{E}(u,0,r_j)\leq C_{12}(\ln r_j)^{k-\frac{1}{2}},
\]
for some $C_{12}>0$ and all $j\geq 1$.

We have therefore reduced the exponent in (\ref{eqoriginal}) by $1/2$ (for a different sequence $r_j\to \infty$). Iterating this scheme a finite number of times, we arrive at
\[
\tilde{E}(u,0,s_j)\to 0\ \ \textrm{as}\ \ j\to \infty,
\]
for some sequence $s_j\to \infty$.

On the other hand, the weak monotonicity formula (\ref{eqmonotoniWeak}) (recall also (\ref{eqmonotSmets})) implies that
$u\equiv a_i$ for some $i\in \{1,\cdots,N \}$, which  contradicts  the assumption that $u$ is nonconstant.
\end{proof}

\textbf{Acknowledgements.} Supported by the ``Aristeia'' program of the Greek
Secretariat for Research and Technology.

\end{document}